\documentclass[10pt,a4paper]{amsart}
\usepackage[utf8]{inputenc}
\usepackage[T1]{fontenc}
\usepackage{amsmath}
\usepackage{upquote}
\usepackage{bm}
\usepackage{amsfonts}
\usepackage{amssymb}
\usepackage[english]{babel}
\usepackage{tabularx}
\usepackage{mathtools}
\usepackage{adjustbox}
\usepackage{graphics}
\usepackage{hyperref}
\usepackage{xcolor}
\usepackage{diagbox}
\usepackage{color}
\usepackage{tikz}
\usepackage{chngcntr}
\usepackage{array}
\newcolumntype{H}{>{\setbox0=\hbox\bgroup}c<{\egroup}@{}}
\usetikzlibrary{calc}
\numberwithin{equation}{section}
\newtheorem{thm}{Theorem}[section]

\newtheorem{lm}[thm]{Lemma}
\newtheorem{cor}[thm]{Corollary}

\theoremstyle{remark}

\newtheorem{re}[thm]{Remark}
\theoremstyle{definition}

\newtheoremstyle{case}{}{}{}{}{}{:}{ }{}
\theoremstyle{case}

\counterwithin*{case}{thm}
\newtheoremstyle{caso}{}{}{}{}{}{:}{ }{}
\theoremstyle{caso}

\DeclareRobustCommand{\svdots}{
  \vbox{%
    \baselineskip=0.33333\normalbaselineskip
    \lineskiplimit=0pt
    \hbox{.}\hbox{.}\hbox{.}%
    \kern-0.2\baselineskip
  }%
}

\newcommand{\N}{\mathbb{N}}

\newcommand{\lattice}{\Lambda }
\theoremstyle{remark}

\allowdisplaybreaks

\makeatletter
\let\@@pmod\pmod
\DeclareRobustCommand{\pmod}{\@ifstar\@pmods\@@pmod}
\def\@pmods#1{\mkern4mu({\operator@font mod}\mkern 6mu#1)}
\makeatother

\title{Submultiplicative Polynomials in Combinatorics}
\author{Krystian Gajdzica, Bernhard Heim, Markus Neuhauser and Błażej Żmija}
\address{Theoretical Computer Science Department \\ Faculty of Mathematics and Computer Science\\ Jagiellonian University\\ Łojasiewicza 6\\ 30-348 Kraków\\ Poland}
\email{krystian.gajdzica@uj.edu.pl}

\address{Faculty of Mathematical and Natural Sciences\\
Mathematical Institute\\
University of Cologne\\
Weyertal 86–90\\
50931 Cologne\\
Germany
}
\email{bheim@uni-koeln.de}

\address{Kutaisi International University\\
5/7\\
Youth Avenue\\
Kutaisi\\
4600 Georgia
}
\email{}
\address{Lehrstuhl f\"ur Geometrie und Analysis\\
RWTH Aachen University\\
52056 Aachen\\
Germany
}
\email{markus.neuhauser@kiu.edu.ge}

\address{Faculty of Applied Mathematics, AGH University of Krakow, Al. Mickiewicza 30, 30-059 Kraków, Poland}
\email{bzmija@agh.edu.pl}
\keywords{partition; A-partition function; submultiplicative; polynomization; Bessenrodt--Ono inequality.}

\subjclass[2020]{Primary 05A17, 11P82; Secondary 05A20.}

\begin{document}

\begin{abstract}
For normalized sequences
$\left(g(n)\right)_{n\in\mathbb{N}}$ we consider recursively defined polynomials $P_n^g(x)$. In this paper we study their submultiplicative property, viewed as a 
Bessenrodt--Ono type inequality for the partition function,
and provide an effective criterion for establishing it. 
\end{abstract}

\setlength{\parindent}{10mm}
\maketitle

\newpage

\section{Introduction}
We investigate submultiplicative sequences.
They appear frequently in chemistry, statistical physics,
and combinatorics. The Nobel laureate P. Flory \cite{Flory} was among the first to study self-avoiding walks in lattices to model the real-life behavior of chain-like entities such as solvents and polymers.
Let $c_n$ be the number of self-avoiding walks in a lattice
$\lattice $ 
visiting every vertex at most once and taking $n$ steps,
started from some fixed vertex, e.g., the origin (see, \cite{Duminil}).
Then 
\begin{equation}\label{def:submultiplicative}
c_{n} \, c_m \geq c_{m+n} \qquad \text{ for 
all } n,m \geq 1.
\end{equation}
This is
called the submultiplicative property of the sequence.
Due to Fekete's Lemma, the limit
if $n$ goes to infinity of $\sqrt[n]{c_n}$ exists and is called the connective constant $\mu
\left( \lattice \right)
$. Duminil-Copin and Smirnov have determined this value for the hexagonal lattice \cite{DS}. In many other cases the value of the connective constant is unknown.

In the context of partitions this property (\ref{def:submultiplicative}) was detected by Bessenrodt and Ono 
\cite{BO} for the partition function for $n,m \geq 5$.
Chern, Fu, and Tang \cite{Chern} generalized this result to $k$-colored partitions $p_{-k}(n)$ and showed that 
(\ref{def:submultiplicative}) is satisfied for all integers $k \geq 3$ and $n,m \geq 1$. Further, let $P_n^{\sigma}(x)$ be the 
D'Arcais polynomials. Then for all $x \geq 3$ and $n,m \geq 1$
these values are also submultiplicative \cite{GHN, HN6}.

In this paper we generalize D'Arcais polynomials
associated with
normalized arithmetic functions $g(n)$ and classify their implication on submultiplicative properties of $P_n^g(x)$, which are defined by
\begin{equation*}
P_n^g(x) := \frac{x}{n} \sum_{k=1}^n g(k) P_{n-k}^g(x)
\end{equation*}
with the initial condition $P_0^g(x)=1$. Let $\sigma_{r}(n):= \sum_{d \mid
n} d^r$. Then $\sigma(n)= \sigma_1(n)$ gives the D'Arcais polynomials, where $p_{-k}(n)= P_n^{\sigma}(k)$.
We state our main theorem, where the following inequality for a fixed $x_0 \in \mathbb{R}_{>0}$ provided by
\begin{equation}\tag{$\ast_n$}\label{eq:star-n}
P_n^g(x_0) \geq \max_{1\le k\le n}
\frac{g(n+k)}{g(k)}
\end{equation}
plays an important role.
\begin{thm}\label{thm:main}
Let $\ell\in\N$, and let $g_{\ell}(n)$ be a normalized sequence. 
Suppose that the inequalities
\begin{equation}\label{eq:gl-growth}
n^{\ell }\leq g_{\ell }\left( n\right) \leq n^{\ell +1}.
\end{equation}
hold for every $n\in\mathbb{N}$. Then $
\left(
P_{n}^{g_{\ell }}\left( x\right) \right)
_{n}
$ is submultiplicative for all $a,b\geq 1$ and $x\geq x_0$, where $x_0 = g_{\ell}(2)$ and \eqref{eq:star-n} is satisfied for all $1 \leq n \leq n_{\ell}$ with $n_1=8$, $n_2=3$, $n_3=2$  and $n_\ell =1$ otherwise.
\end{thm}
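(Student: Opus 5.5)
The plan is an induction on $a+b$ driven by the recursion for $P_n^g$, with \eqref{eq:star-n} as the single input needed at every step. The verification of \eqref{eq:star-n} splits into a finite check for $n\le n_\ell$ and an analytic estimate from \eqref{eq:gl-growth} for $n>n_\ell$.

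\emph{Reduction.} Write $g=g_\ell$. Since $g(n)\ge n^\ell>0$, every $P_n^g(x)$ has nonnegative coefficients, so it is increasing in $x>0$. Hence it suffices to know that \eqref{eq:star-n} holds at $x_0=g(2)$ for \emph{all} $n$; it then holds for all $x\ge x_0$.

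Assume this and fix $x\ge x_0$. We prove $P_a^g(x)P_b^g(x)\ge P_{a+b}^g(x)$ by induction on $a+b$, and we may take $a\le b$. Write
\[
(a+b)P_{a+b}^g(x)=x\sum_{k=1}^{b} g(k)P^g_{a+b-k}(x)+x\sum_{j=1}^{a} g(b+j)P^g_{a-j}(x).
\]
\begin{itemize}
\item In the first sum, $a+b-k<a+b$, so the induction hypothesis (or a trivial identity when $k=b$) gives $P^g_{a+b-k}\le P^g_aP^g_{b-k}$. The first sum is therefore at most $P_a^g(x)\cdot bP_b^g(x)$.
\item In the second sum, $j\le a\le b$, so \eqref{eq:star-n} with $n=b$ gives $g(b+j)\le P_b^g(x)\,g(j)$. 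The second sum is therefore at most $P_b^g(x)\cdot x\sum_{j\le a}g(j)P^g_{a-j}(x)=P_b^g(x)\cdot aP_a^g(x)$.
\end{itemize}
Adding the two bounds and dividing by $a+b$ gives the claim.

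\emph{Verifying \eqref{eq:star-n}.} By the hypothesis of the theorem, \eqref{eq:star-n} holds at $x_0$ for $n\le n_\ell$. For $n>n_\ell$ we derive it from \eqref{eq:gl-growth}.

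First bound the right-hand side. We have $g(n+k)/g(k)\le (n+k)^{\ell+1}/k^{\ell}$. On $1\le k\le n$ this function of $k$ is convex, so its maximum is at an endpoint. Thus the right-hand side is at most
\[
\max\bigl((n+1)^{\ell+1},\,2^{\ell+1}n\bigr),
\]
which is $(n+1)^{\ell+1}$ for all relevant $n$.

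For the left-hand side, use monotonicity in $g$: the coefficients of $P_n^g$ are nondecreasing in the values $g(k)$. Hence $P_n^g(x_0)\ge P_n^{h}(2^\ell)$ with $h(k)=k^\ell$, using $x_0=g(2)\ge 2^\ell$.

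\begin{itemize}
\item \emph{Case $\ell\ge2$.} Use the $x^2$ part of the recursion. From $P_m^g(x)\ge x\,g(m)/m$ we get
\[
P_n^g(x)\ge \frac{x^2}{n}\sum_{k=1}^{n-1}k^\ell(n-k)^{\ell-1}\gg 4^\ell n^{2\ell-1}.
\]
This beats $(n+1)^{\ell+1}$ once $n$ exceeds a small threshold. Making the constants explicit, for instance by keeping the exact sum for small $n$, should give the thresholds $n_2=3$, $n_3=2$ and $n_\ell=1$ for $\ell\ge4$.
\item \emph{Case $\ell=1$.} The $x^2$ term gives only about $2n\cdot x_0^2$ against $(n+1)^2$, which is not enough. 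Instead use the generating function
\[
\sum_n P_n^{\mathrm{id}}(x)q^n=\exp\!\bigl(xq/(1-q)\bigr),
\]
whose coefficients are Laguerre-type polynomials with explicit positive coefficients $\binom{n-1}{m-1}x^m/m!$. At $x=2$, summing a few terms ($m=1,2,3$) gives a cubic lower bound in $n$, which exceeds $(n+1)^2$ for $n\ge9$. This matches $n_1=8$.
\end{itemize}

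\emph{Main obstacle.} The main obstacle is this last step: obtaining clean explicit lower bounds for $P_n^g(x_0)$ that beat $(n+1)^{\ell+1}$ exactly beyond the stated $n_\ell$. This matters most for $\ell=1$, where a single low-order coefficient does not suffice and several coefficients, or the closed Laguerre form, must be used. I would first try summing the coefficients of $x^1,\dots,x^{\ell+2}$ of $P_n^{h}$ in closed form and comparing polynomially. The induction step itself is routine once \eqref{eq:star-n} is available for all $n$.
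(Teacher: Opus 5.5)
Your reduction and the induction on $a+b$ are correct, and they are somewhat cleaner than the paper's route. The paper proves the inequality at $x_0$ (Lemma~\ref{Lemma: g(n) (1)}, with the same splitting you use) and then needs a separate derivative argument (Lemma~\ref{Lemma: g(n) 2}) to pass to $x\ge x_0$. You instead observe that monotonicity of $P_b^g$ in $x$ carries \eqref{eq:star-n} from $x_0$ to every $x\ge x_0$, so the same induction runs directly at each $x$ and the derivative lemma becomes unnecessary. Your convexity bound $(n+1)^{\ell+1}$ for the right-hand side also agrees with Lemma~\ref{lm:max}.

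\textbf{The gap: the case $\ell=1$.} This is exactly where $n_1=8$ is decided, and as written the step fails. The coefficient $\binom{n-1}{m-1}/m!$ of $x^m$ has degree $m-1$ in $n$, not $m$. So the terms $m=1,2,3$ at $x=2$ give only the quadratic
\[
2n+\tfrac23(n-1)(n-2)=\tfrac23n^2+\tfrac43 ,
\]
which lies below $(n+1)^2$ for every $n$. Your fallback of summing the coefficients of $x^1,\dots,x^{\ell+2}$ is this same failing sum when $\ell=1$. Adding $m=4$ still fails at $n=9$, where the sum is $92\tfrac23<100$. You need $m\le5$, which is what the paper does: $\sum_{k=1}^{5}\frac{2^k}{k!}\binom{n-1}{k-1}-(n+1)^2=\frac{n^4}{90}-\frac{11n^2}{18}-\frac{4n}{3}-\frac{1}{15}$, and this is positive for all $n\ge9$.

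\textbf{The case $\ell\ge2$.} This part is viable but not as you state it. The bound $\frac{x_0^2}{n}\sum_{k=1}^{n-1}k^\ell(n-k)^{\ell-1}$ is valid, but it is not $\gg 4^\ell n^{2\ell-1}$. For large $n$ the sum is $\sim n^{2\ell}/\bigl(\ell\binom{2\ell}{\ell}\bigr)$, which cancels the $4^\ell$ up to a factor of order $\ell^{-1/2}$.
\begin{itemize}
\item For $\ell=2$, even after adding back the dropped $k=n$ term, your bound is $\frac{x_0^2}{12}n(n^2-1)+\frac{x_0g(n)}{n}\ge\frac43n(n^2-1)+4n$. This exceeds $(n+1)^3$ only for $n\ge10$, so $4\le n\le9$ must be checked explicitly.
\item For $\ell\ge3$ the exponent gap $2\ell-1>\ell+1$ does rescue you. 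You still owe an argument that is uniform in $\ell$ for small $n$, for example $n=2$ via $P_2^{g}(x_0)=x_0^2\ge4^\ell\ge3^{\ell+1}$ for $\ell\ge4$.
\end{itemize}
The paper handles this with explicit inequalities such as $\frac{2\cdot8^\ell+6^\ell}{3}\ge4^{\ell+1}$, together with an induction proving $P_n^{g_\ell}(x_0)\ge(n+1)^{\ell+1}$ for $n\ge9$.
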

To cover also $k$-regular and $k$-colored partitions, we 
extend this result to the case $1 \leq g(n) \leq \sigma(n)$. 
\begin{thm}\label{thm:main 2}
Let the normalized sequence $\left(
g\left( n\right) \right)
_n$ satisfy $1 \leq g(n) \leq \sigma(n)$ for all $n \in \mathbb{N}$. Let further
\begin{equation}
3g(2)\left(g(2)+3\right)\geq2g(4)\hspace{5mm}\text{and}
\hspace{5mm}
g(3)\left(2g(3)+9g(2)+9\right)\geq2g(6).
\label{condition3}
\end{equation}
Then the
submultiplicative property  
\begin{equation}
P_n^g(x) \, P_m^g(x) \geq P_{n+m}^g(x)
\label{submultiplicative}
\end{equation}
holds true for all $m,n \geq 1$ and $x \geq 3$. Moreover
regardless of condition (\ref{condition3}) the
submultiplicative property (\ref{submultiplicative})
holds true for all $m,n\geq 1$ and $x\geq 4$.
\end{thm}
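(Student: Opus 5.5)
Throughout, write $P_n=P_n^g(x)$. The plan is induction on $n+m$, in the style used for the D'Arcais polynomials in \cite{GHN, HN6}, driven by a reduction to a few small cases through an inequality of the type \eqref{eq:star-n}.

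\textbf{Comparison sequence.} Since $1\le g(k)\le\sigma(k)$ and all coefficients are nonnegative, an easy induction on $n$ gives, for $x>0$,
\[
P_n^1(x)\le P_n^g(x)\le P_n^\sigma(x).
\]
Here $P_n^1(x)=\binom{x+n-1}{n}$ has generating function $(1-q)^{-x}$. Moreover $g(1)=1$, so the recursion yields $P_n\ge \frac{x}{n}P_{n-1}$.

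\textbf{Induction step.} Assume $n\le m$ and that \eqref{submultiplicative} holds for all pairs with smaller sum. Multiply the recursion for $P_{n+m}$ by $(n+m)/x$:
\[
\frac{n+m}{x}P_{n+m}=\sum_{k=1}^{n+m} g(k)P_{n+m-k}.
\]
Split the sum into $k\le m$ and $k>m$.
\begin{itemize}
\item For $k\le m$, apply the hypothesis to the pair $(n,m-k)$ to get $P_{n+m-k}\le P_nP_{m-k}$. Summing gives at most $P_n\cdot\frac{m}{x}P_m$.
\item For $k=m+j$ with $1\le j\le n$, compare $g(m+j)P_{n-j}$ with $g(j)P_{n-j}$. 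This is exactly where a bound of the form \eqref{eq:star-n} enters. Using $g(m+j)\le\sigma(m+j)$ and $g(j)\ge1$, the ratio is not directly controlled, so I would instead bound $\sum_{j}g(m+j)P_{n-j}\le \frac{n}{x}P_n\cdot P_m$. This follows if $P_m\ge \max_j g(m+j)/g(j)$, or more crudely if $P_m\ge\sigma(m+n)\ge g(m+n)$.
\end{itemize}
Since $P_m\ge\binom{x+m-1}{m}$ grows polynomially of degree $m$ in $x$, while $\sigma(m+n)\le (m+n)(1+\log(m+n))$ and $n\le m$, the crude version holds for $x\ge3$ once $m$ exceeds a small bound. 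Adding the two parts gives $\frac{n+m}{x}P_{n+m}\le\frac{n+m}{x}P_nP_m$, which is the induction step. The real requirement is therefore an inequality $P_m(x)\ge \max_{1\le j\le n}g(m+j)/g(j)$ for $x\ge3$ and $n\le m$. This is a version of \eqref{eq:star-n} in which the upper bound $\sigma$ is used in the numerator and the lower bound $1$ in the denominator, except where the actual small values of $g$ matter.

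\textbf{Small cases (the main obstacle).} For $m\ge m_0$, with $m_0$ around $4$ or $5$ when $x\ge3$, the binomial lower bound already beats $\sigma(2m)$. The finitely many remaining pairs with $n\le m<m_0$ must be checked by hand. In those cases $P_n$ is an explicit polynomial in $x$ with coefficients in $g(2),\dots,g(2m_0)$, for example $P_2=\frac{x}{2}\bigl(x+g(2)\bigr)$. Expanding $P_nP_m-P_{n+m}$ as a polynomial in $x$ and in the values $g(k)$, and using $1\le g(k)\le\sigma(k)$, settles most cases for $x\ge3$.

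The critical cases are the pairs $(2,2)$ and $(3,3)$, where $g(4)$ and $g(6)$ enter $P_{n+m}$ with the negative sign. At $x=3$ I expect these to reduce precisely to the inequalities in \eqref{condition3}: $3g(2)(g(2)+3)\ge2g(4)$ and $g(3)(2g(3)+9g(2)+9)\ge2g(6)$. The substitution should make the lower-order terms in $x$ cancel or be dominated monotonically for $x\ge3$.

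For the unconditional statement at $x\ge4$, I would verify that these two difference polynomials are nonnegative for all admissible $g$ in the box $1\le g(k)\le\sigma(k)$. The negative terms are linear in $g(4)$ and $g(6)$, so the worst case is $g(4)=\sigma(4)=7$ and $g(6)=\sigma(6)=12$ with $g(2)=g(3)=1$. It also has to be shown that each difference, viewed as a polynomial in $x$, is increasing from the relevant threshold onward.

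The bookkeeping in these finite cases, together with choosing $m_0$ so that the general step applies, is where I expect most of the effort to lie.
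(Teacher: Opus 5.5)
Your proposal is correct. It shares the paper's core argument but handles the small cases differently.

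Your induction step is exactly the computation in Lemma~\ref{Lemma: g(n) (1)}, with your larger index $m$ playing the role of the paper's $a\ge b$. Reducing everything to $P_m^g(x)\ge\max_{1\le j\le n}g(m+j)/g(j)$, using $P_m^g(x)\ge\binom{m+2}{2}$ and $\sigma(k)\le k(1+\ln k)$, is also the paper's argument. You depart from it in two places:
\begin{itemize}
\item You run the induction separately at each fixed $x\ge3$. This is legitimate because $P_m^g$ has nonnegative coefficients, so the criterion at $x_0$ persists for all $x\ge x_0$. It makes the derivative argument of Lemma~\ref{Lemma: g(n) 2} unnecessary.
\item At the pairs $(2,2)$ and $(3,3)$, where the criterion is not automatic, the paper simply imposes it: \eqref{condition3} is literally $P_2^g(3)\ge g(4)/g(2)$ and $P_3^g(3)\ge g(6)/g(3)$. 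You instead check $P_2^2\ge P_4$ and $P_3^2\ge P_6$ directly.
\end{itemize}
The second choice costs writing out $P_4$ and $P_6$ in five parameters. It buys more than you anticipate, because the direct differences do not reduce to \eqref{condition3}. For example,
\[
P_2^g(3)^2-P_4^g(3)=\tfrac38\bigl(3g(2)^2+18g(2)+45-8g(3)-2g(4)\bigr)\ge\tfrac{15}{2}
\]
on the whole box $1\le g(k)\le\sigma(k)$. Likewise $P_3^g(3)^2-P_6^g(3)\ge\frac{61}{2}$, with the minimum at $g(2)=g(3)=1$, $g(4)=7$, $g(5)=6$, $g(6)=12$. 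Both differences remain positive for all $x\ge3$. So, carried out, your route gives the $x\ge3$ statement without \eqref{condition3}. The paper's uniform criterion is only sufficient, and it pays for its simplicity with that extra hypothesis.

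Some slips need repair.
\begin{itemize}
\item The ``crude'' condition $P_m\ge\sigma(m+n)$ is not enough. You need $\max_{1\le j\le n}g(m+j)$, and $\sigma$ is not monotone ($\sigma(6)=12>\sigma(7)=8$). Use $\max_j\sigma(m+j)$ or the increasing majorant $(m+n)(1+\ln(m+n))$.
\item That majorant only beats $\binom{m+2}{2}$ from $m=15$ on, so $4\le m\le14$ needs a table of actual $\sigma$-values, as in the paper.
\item Your worst case is misidentified. The term $g(3)$ appears in $P_4$ with coefficient $x^2/3$, so for $(2,2)$ the extremal choice is $g(3)=4$, not $1$.
\item $P_6$ contains $g(5)$, $g(2)g(4)$ and $g(2)^3$. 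So for $(3,3)$ you must take $g(5)=6$ and check separately that the difference increases in $g(2)$.
\end{itemize}
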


\begin{re}
Note that condition (\ref{condition3}) is automatically
fulfilled if
$g\left( 2\right)
\geq 2$ or $g\left( 4\right) \leq 6$
and $g\left( 6\right) \leq 10$. 
\end{re}

Therefore, let $g^{(k)}(n):= \sigma(n) - k \sigma(n/k)$. Here $\sigma(y)=0$ if $y \not\in \mathbb{N}$. Then $P_n^{g^{(k)}}(r)$ is equal to the number of all $k$-regular partitions with $r$-colors. This implies that the sequence of
$k$-regular and $k$-colored partitions is submultiplicative for all $k >1$ and $r \geq 3$.

Let us describe the content of this work in some details. Section 2 gains some fundamental properties related to the family of polynomials $\left(
P_n^g(x)\right)
_n$, which are used throughout the paper. In Section 3, we prove Theorem \ref{thm:main}. Section 4 exhibits some applications of  Theorem \ref{thm:main}. Finally, the last section is devoted to the proof of Theorem \ref{thm:main 2}.

\section{Preliminaries}

Throughout the paper we use the following conventions. By $\mathbb{R}$, $\mathbb{N}$,
and $\mathbb{N}_{\geq m}$ we mean the set of real numbers, the set of positive integers,
and the set of positive integers greater or equal than $m$, respectively.

Let us point out that by $(g(n))_{n\in\mathbb{N}}$, we always mean a sequence of positive real numbers such that $g(1)=1$. For a sequence $g(n)$ we assign a family of polynomials defined recursively by
\begin{equation*}
P_n^g(x) := \frac{x}{n} \sum_{k=1}^n g(k) P_{n-k}^g(x)
\end{equation*}
with initial condition $P_0^g(x):=1$. In such a setting, we automatically obtain that $P_n^g(x)$ is a polynomial of degree $n$ with the leading coefficient $1/n!$. Furthermore, one can derive
a recurrence formula for its derivative.

\begin{lm}\label{Lemma: Recurrence for P_n^g}
    Under our assumptions on $(g(n))_{n\in\mathbb{N}}$ and $P_n^g(x)$, we have that
    \begin{align*}
        \frac{\mathrm{d}
        }{\mathrm{d}x}P_{n}^{g}(x)=\sum_{j=1}^n\frac{g(j)}{j}P_{n-j}^g(x)
    \end{align*}
    is valid for every natural number $n$ (see \cite{HN5}).
\end{lm}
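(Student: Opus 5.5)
Let me write $G(t):=\sum_{k\ge1}\frac{g(k)}{k}t^k$ and work with the generating function $F(t,x):=\sum_{n\ge0}P_n^g(x)\,t^n$. The plan is to show $F(t,x)=\exp\bigl(x\,G(t)\bigr)$ as formal power series in $t$ with coefficients in $\mathbb{R}[x]$, and then to differentiate this identity with respect to $x$ and compare coefficients of $t^n$.

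\emph{Step 1: the exponential form.} Put $E(t):=\exp(xG(t))=\sum_{n\ge0}Q_n(x)t^n$, so that $Q_0(x)=1$. Applying $t\frac{\partial}{\partial t}$ gives
\begin{equation*}
t\frac{\partial}{\partial t}E(t)=x\Bigl(\sum_{k\ge1}g(k)t^k\Bigr)E(t).
\end{equation*}
Comparing coefficients of $t^n$ yields
\begin{equation*}
nQ_n(x)=x\sum_{k=1}^n g(k)\,Q_{n-k}(x).
\end{equation*}
This is exactly the defining recursion of $P_n^g(x)$, and the initial value agrees. By induction on $n$ we get $Q_n=P_n^g$ for all $n$.

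\emph{Step 2: differentiate in $x$.} Since $x$ enters only through the factor $xG(t)$ in the exponent, termwise differentiation of the formal series gives
\begin{equation*}
\frac{\partial}{\partial x}F(t,x)=G(t)\,F(t,x)=\Bigl(\sum_{j\ge1}\frac{g(j)}{j}t^j\Bigr)\Bigl(\sum_{m\ge0}P_m^g(x)t^m\Bigr).
\end{equation*}
Extracting the coefficient of $t^n$ on both sides gives
\begin{equation*}
\frac{\mathrm{d}}{\mathrm{d}x}P_n^g(x)=\sum_{j=1}^n\frac{g(j)}{j}P_{n-j}^g(x),
\end{equation*}
which is the claim.

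The only step needing care is justifying the formal manipulations. Each coefficient of $\exp(xG(t))$ is a finite polynomial expression in $x$ and $g(1),\dots,g(n)$, because $G$ has no constant term. Hence the identities hold in the ring $\mathbb{R}[x][[t]]$. There, $\partial_x$ and $t\partial_t$ are derivations satisfying $\partial\exp(H)=(\partial H)\exp(H)$ for any $H$ without constant term. No convergence is needed, so I expect no genuine obstacle.
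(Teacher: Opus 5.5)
Your proof is correct and complete. You derive $\sum_{n\ge0}P_n^g(x)t^n=\exp\bigl(x\sum_{k\ge1}g(k)t^k/k\bigr)$ from $t\partial_t$ and the recursion, and then apply $\partial_x$ coefficientwise. This is sound in $\mathbb{R}[x][[t]]$: both $\partial_x$ and $t\partial_t$ preserve $t$-adic order, so they pass through the series defining $\exp$.

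The paper gives no argument of its own for this lemma, only the pointer to \cite{HN5}. Your generating-function route is the standard one. It uses the same exponential form the paper later quotes from Bryan--Fulman for $g=\lambda_\ell$.

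There is also an equally short proof that stays at the polynomial level, by induction on $n$. Differentiate $nP_n^g(x)=x\sum_{k=1}^n g(k)P_{n-k}^g(x)$ and insert the induction hypothesis for the derivatives of $P_{n-k}^g$. Then swap the double sum and apply the recursion at index $n-j$ to get
\[
n\frac{\mathrm{d}}{\mathrm{d}x}P_n^g(x)=\sum_{j=1}^n g(j)P_{n-j}^g(x)+\sum_{j=1}^{n-1}\frac{g(j)}{j}(n-j)P_{n-j}^g(x)=n\sum_{j=1}^n\frac{g(j)}{j}P_{n-j}^g(x).
\]
That version needs no formal power series. Yours has the advantage of producing the generating function along the way. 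It also explains the identity structurally: $\partial_x$ acts on $\exp(xG)$ as multiplication by $G$.
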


Before we proceed to the main part of the work, let us exhibit a special family of polynomials, which plays an important role through the paper.

For $\ell\geq0$, we define the sequence
\begin{align}\label{def: poly n^l}
\psi_\ell(n):=n^\ell
\end{align}
with assigned family of polynomials of the form $P_0^{\psi_\ell}(x):=1$ and

\begin{equation*}
P_n^{\psi_\ell}(x) := \frac{x}{n} \sum_{k=1}^n k^\ell P_{n-k}^{\psi_\ell}(x)\hspace{0.5cm}\text{for }n\geq1.
\end{equation*}

\subsection{Polynomials $P_n^{\psi_0}(x)$}
Here, let us also observe that for the sequence $\psi_0(n)=1$, the polynomials $P_n^{\psi_0}(x)$ might be expressed as
\begin{equation}\label{Formula for g_1(n)}
    P_n^{\psi_0}(x)=\frac{1}{n!}\prod_{i=0}^{n-1}(x+i)=\binom{x+n-1}{n}
\end{equation}
for every non-negative integer $n$, which is a direct consequence of the recurrence formula for $P_n^{\psi_0}(x)$ and the fact that $
\psi _{0}\left( m\right)
=1$ for each $m\in\mathbb{N}$. Therefore, we get the followings.

\begin{cor}\label{Corollary: g_1(n)}
The inequality 
\begin{equation*}
P_a^{\psi_0}(x)P_b^{\psi_0}(x)\geq P_{a+b}^{\psi_0}(x)
\end{equation*}
is satisfied for all $a,b\in\mathbb{N}$ and $x\geq1$ with
equality only for $x=1$.
\end{cor}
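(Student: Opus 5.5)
Since $P_n^{\psi_0}(x)=\binom{x+n-1}{n}=\frac{1}{n!}\prod_{i=0}^{n-1}(x+i)$ by (\ref{Formula for g_1(n)}), the plan is to compare the two sides factor by factor. Fix $a,b\in\mathbb{N}$ and $x\geq1$; all quantities involved are positive. Then
\begin{equation*}
\frac{P_a^{\psi_0}(x)P_b^{\psi_0}(x)}{P_{a+b}^{\psi_0}(x)}
=\frac{(a+b)!}{a!\,b!}\cdot\frac{\prod_{i=0}^{b-1}(x+i)}{\prod_{i=a}^{a+b-1}(x+i)}
=\binom{a+b}{a}\prod_{i=0}^{b-1}\frac{x+i}{x+a+i}.
\end{equation*}
Writing $\binom{a+b}{a}=\prod_{i=0}^{b-1}\frac{a+1+i}{1+i}$, the ratio becomes
\begin{equation*}
\prod_{i=0}^{b-1}\frac{(a+1+i)(x+i)}{(1+i)(x+a+i)}.
\end{equation*}

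It therefore suffices to show that each factor is at least $1$, i.e.\ $(a+1+i)(x+i)\geq(1+i)(x+a+i)$. Expanding both sides, the difference is
\begin{equation*}
(a+1+i)(x+i)-(1+i)(x+a+i)=a x+ai+x+i+i x+i^2-(x+a+i+ix+ai+i^2)=a(x-1),
\end{equation*}
which is $\geq0$ for $x\geq1$. Hence the product is $\geq1$, which gives the inequality.

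For the equality case: if $x>1$, then every factor is strictly greater than $1$ because $a\geq1$, and the product runs over $b\geq1$ factors, so the inequality is strict. At $x=1$ every factor equals $1$; this is also visible directly, since $P_n^{\psi_0}(1)=\binom{n}{n}=1$. There is no real obstacle; the only step needing care is the regrouping of the binomial coefficient so that the product telescopes into factors of this form.
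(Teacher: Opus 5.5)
Your proof is correct and follows the paper's route. The paper derives the corollary directly from the closed form $P_n^{\psi_0}(x)=\binom{x+n-1}{n}$ without writing out any details. Your factor-by-factor comparison, where the numerator of each factor exceeds its denominator by $a(x-1)$, supplies exactly those omitted details, including strictness for $x>1$ and equality at $x=1$.
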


\begin{lm}\label{Lemma: P_n^g>f_{g_1,n}}
    Let $\left(g(n)\right)_{n\in\mathbb{N}}$ be a sequence of positive real numbers such that $g(n)\geq1$ for each $n\in\mathbb{N}$. Then, we have
    \begin{align*}
        P_n^g(x)\geq P_n^{\psi_0}(x)
    \end{align*}
    for every $n\in\mathbb{N}$ and $x>0$ with the equality only for $g\equiv \psi_0$.
\end{lm}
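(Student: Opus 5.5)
The plan is a straightforward induction on $n$, comparing the two recurrences term by term. First I establish positivity: for every $x>0$ and every $n\geq 0$ we have $P_n^g(x)>0$. This holds because $P_0^g(x)=1$, and if $P_0^g(x),\dots,P_{n-1}^g(x)$ are positive, then
\begin{equation*}
P_n^g(x)=\frac{x}{n}\sum_{k=1}^n g(k)P_{n-k}^g(x)
\end{equation*}
is a positive multiple of a sum of positive terms, since $g(k)>0$. The same argument, or the explicit formula \eqref{Formula for g_1(n)}, gives $P_n^{\psi_0}(x)>0$ for $x>0$.

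Next comes the inequality itself, by strong induction on $n$. The case $n=0$ is trivial, since both sides equal $1$. Assume $P_j^g(x)\geq P_j^{\psi_0}(x)$ for all $0\leq j<n$ and a fixed $x>0$. Using $g(k)\geq 1$, the positivity of $P_{n-k}^g(x)$, and the induction hypothesis, I obtain
\begin{equation*}
P_n^g(x)=\frac{x}{n}\sum_{k=1}^n g(k)P_{n-k}^g(x)\geq \frac{x}{n}\sum_{k=1}^n P_{n-k}^g(x)\geq \frac{x}{n}\sum_{k=1}^n P_{n-k}^{\psi_0}(x)=P_n^{\psi_0}(x).
\end{equation*}
The last equality is exactly the recurrence defining $P_n^{\psi_0}$ with $\psi_0\equiv 1$.

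For the equality statement, suppose $g\not\equiv\psi_0$, and let $k_0$ be the least index with $g(k_0)>1$; note $k_0\geq 2$ since $g(1)=1$. Then for every $n\geq k_0$ the first inequality in the chain is strict. Indeed, the $k_0$-th summand satisfies $g(k_0)P_{n-k_0}^g(x)>P_{n-k_0}^g(x)$ because $P_{n-k_0}^g(x)>0$, and all other summands satisfy the weak inequality. Conversely, if $g\equiv\psi_0$, the two families of polynomials coincide identically.

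The only delicate point is the precise meaning of ``equality only for $g\equiv\psi_0$''. For small $n$, equality can occur even when $g\not\equiv\psi_0$. For example, $P_1^g(x)=x=P_1^{\psi_0}(x)$ always, and more generally $P_n^g=P_n^{\psi_0}$ whenever $g(k)=1$ for all $k\leq n$, because $P_n^g$ depends only on $g(1),\dots,g(n)$. So I will state the equality case accordingly: $P_n^g(x)=P_n^{\psi_0}(x)$ for some $x>0$ if and only if $g(k)=1$ for all $1\leq k\leq n$. Equality holds for all $n$ precisely when $g\equiv\psi_0$, which is the intended reading of the lemma.
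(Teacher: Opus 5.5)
Your proof is correct. The paper states this lemma without any proof, and your term-by-term induction on the recurrence is exactly the evident argument it relies on; the positivity of $P_{n-k}^g(x)$ for $x>0$ is what justifies the comparison.

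Your refinement of the equality clause is also right and worth keeping. As literally stated, the lemma fails for fixed $n$: for example, $P_1^g(x)=x=P_1^{\psi_0}(x)$ for every admissible $g$. The accurate statement is that $P_n^g(x)=P_n^{\psi_0}(x)$ for some $x>0$ if and only if $g(k)=1$ for all $k\leq n$.
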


\subsection{Polynomials $P_n^{\psi_1}(x)$}

Here, we focus on the special case $P_n^{\psi_1}(x)$ that corresponds to the well-known Laguerre polynomials. Let us recall that $\alpha$-associated Laguerre polynomial $L_n^{(\alpha)}(x)$ is defined by
$$L_n^{(\alpha)}(x)=\sum_{k=0}^n\binom{n+\alpha}{n-k}\frac{(-x)^{k}}{k!}.$$
These polynomials have the leading coefficients $(-1)^n/n!$. Furthermore, they fulfill the recurrence relation of the form
$$nL_n^{(\alpha)}(x)=(2n+\alpha-1-x)L_{n-1}^{(\alpha)}(x)-(n+\alpha-1)L_{n-2}^{(\alpha)}(x).$$
It turns out that they are explicitly related to the polynomials considered in this subsection via  
\begin{align*}
P_n^{\psi_1}(x)=\frac{x}{n}\sum_{j=1}^njP_{n-j}^{\psi_1}(x)=\sum_{k=1}^n\frac{x^k}{k!}\binom{n-1}{k-1}=
\frac{x}{n}L_{n-1}^{(1)}(-x),
\end{align*}
where the penultimate equality is a well-known property of Laguerre polynomials. For more details concerning orthogonal polynomials, we refer the reader to Chihara's book \cite{Chihara}.

Now, we are ready to proceed to the main part of the manuscript.

\section{The proof of Theorem \ref{thm:main}}

The main aim of this section is to prove Theorem \ref{thm:main}. In order to do that, we need some auxiliary results. The strategy is to prove an appropriate criterion, and then apply it to conclude the main theorem.
Hence, we start by showing the following.

\begin{thm}\label{Theorem: General B-O (2)}
    Let $(g(n))_{n\in\mathbb{N}}$ be a sequence of positive real numbers with $g(1)=1$, and let $x_0$ be such that the inequalities
    \begin{align*}
        P_{m}^g(x_0)\geq\max_{1\leq i\leq m}\frac{g(m+i)}{g(i)}
    \end{align*}
    hold for every $m\in\mathbb{N}$. Then, we have that
    \begin{align*}
        P_{a}^g(x)P_{b}^g(x)\geq P_{a+b}^g(x)
    \end{align*}
    is satisfied for every $x\geq x_0$ and $a,b\geq1$.
\end{thm}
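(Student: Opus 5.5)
The plan is a strong induction on $a+b$, proving the inequality for all $x\geq x_0$ at once. Because $P_a^g(x)P_b^g(x)\geq P_{a+b}^g(x)$ is symmetric in $a$ and $b$, we may assume $a\geq b\geq 1$. This assumption matters: the hypothesis at $m=a$ only controls the ratios $g(a+i)/g(i)$ for $1\leq i\leq a$, and the argument will need them for $1\leq i\leq b$.

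Write $n=a+b$. In the defining recurrence
\begin{equation*}
nP_n^g(x)=x\sum_{k=1}^{n} g(k)P_{n-k}^g(x),
\end{equation*}
split the sum into the ranges $1\leq k\leq a$ and $k=a+j$ with $1\leq j\leq b$. This gives
\begin{equation*}
nP_{a+b}^g(x)=x\sum_{k=1}^{a} g(k)P_{a+b-k}^g(x)+x\sum_{j=1}^{b} g(a+j)P_{b-j}^g(x).
\end{equation*}

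For the first sum:
\begin{itemize}
\item when $1\leq k<a$, the pair $(a-k,b)$ has both entries $\geq 1$ and total $<a+b$, so the induction hypothesis gives $P_{a+b-k}^g(x)\leq P_{a-k}^g(x)P_b^g(x)$;
\item when $k=a$, the term is $g(a)P_b^g(x)=g(a)P_0^g(x)P_b^g(x)$, an equality.
\end{itemize}
Hence the first sum is at most $P_b^g(x)\sum_{k=1}^a g(k)P_{a-k}^g(x)=P_b^g(x)\,\tfrac{a}{x}P_a^g(x)$, using the recurrence for $P_a^g$.

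For the second sum, since $j\leq b\leq a$, the hypothesis at $m=a$ gives $g(a+j)\leq g(j)P_a^g(x_0)$. We then need $P_a^g(x_0)\leq P_a^g(x)$ for $x\geq x_0$. All $g(n)>0$, so by the recurrence every $P_n^g$ has nonnegative coefficients and is positive on $x>0$. Lemma~\ref{Lemma: Recurrence for P_n^g} then shows the derivative of $P_a^g$ is a positive combination of such polynomials, hence positive, so $P_a^g$ is increasing on $(0,\infty)$. The second sum is therefore at most $P_a^g(x)\sum_{j=1}^b g(j)P_{b-j}^g(x)=P_a^g(x)\,\tfrac{b}{x}P_b^g(x)$.

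Adding the two bounds gives $(a+b)P_{a+b}^g(x)\leq (a+b)P_a^g(x)P_b^g(x)$, which closes the induction step. The base case $a=b=1$ is the same computation with an empty range $k<a$: it only uses $P_2^g(x)=\tfrac{x}{2}(x+g(2))$ and $g(2)\leq P_1^g(x_0)=x_0\leq x$.

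The step needing the most care is the bookkeeping. The split point of the recurrence must be chosen at $a$, the larger index. This makes the leftover ratios $g(a+j)/g(j)$ use only $j\leq b\leq a$, which is exactly what the hypothesis at $m=a$ provides. It also keeps every pair fed into the induction hypothesis, $(a-k,b)$ with $k<a$, strictly inside the range $a-k,b\geq 1$.
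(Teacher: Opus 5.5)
Your proof is correct, but it is organized differently from the paper's.

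The paper uses two separate inductions:
\begin{itemize}
\item Lemma~\ref{Lemma: g(n) (1)} performs exactly your splitting of the recurrence at the larger index $a$, but only at the point $x_0$. It arrives at $P_a^g(x_0)P_b^g(x_0)-P_{a+b}^g(x_0)\geq \frac{x_0}{a+b}\sum_{j=1}^b\bigl(g(j)P_a^g(x_0)-g(a+j)\bigr)P_{b-j}^g(x_0)$.
\item Lemma~\ref{Lemma: g(n) 2} then extends the inequality to $x\geq x_0$. It uses the derivative recurrence of Lemma~\ref{Lemma: Recurrence for P_n^g} to show that the derivative of $P_a^gP_b^g-P_{a+b}^g$ is at least $\sum_{j=1}^b\bigl(\frac{g(j)}{j}P_a^g(x)-\frac{g(a+j)}{a+j}\bigr)P_{b-j}^g(x)$, hence positive.
\end{itemize}

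You instead observe that $P_a^g$ is increasing on $(0,\infty)$. The hypothesis at $x_0$ therefore gives $g(a+j)\leq g(j)P_a^g(x)$ for every $x\geq x_0$, so the single computation can be run directly at $x$. This makes the derivative lemma unnecessary.

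The paper's final step in Lemma~\ref{Lemma: g(n) 2} tacitly needs the same fact, $P_a^g(x)\geq P_a^g(x_0)$ for $x>x_0$, to make the brackets nonnegative. So your route uses no extra input and is shorter. What the paper's route gives in addition is that the gap $P_a^g(x)P_b^g(x)-P_{a+b}^g(x)$ is increasing on $[x_0,\infty)$.

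One small point you should state explicitly: the hypothesis at $m=1$ gives $x_0=P_1^g(x_0)\geq g(2)>0$. This places $[x_0,\infty)$ inside $(0,\infty)$, which your monotonicity argument needs, and it justifies dividing by $x$ when you rewrite the sums via the recurrence.
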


In order to prove Theorem \ref{Theorem: General B-O (2)}, we need two auxiliary lemmas.

\begin{lm}\label{Lemma: g(n) (1)}
    Let $x_0$ be such that the inequalities
    \begin{align*}
        P_m^{g}(x_0)\geq\max_{1\leq i\leq m}\frac{g(m+i)}{g(i)}.
    \end{align*}
hold for every $m\geq 1$. Then, the inequality
    \begin{align*}
        P_a^{g}(x_0)P_b^{g}(x_0)\geq P_{a+b}^{g}(x_0)
    \end{align*}
   holds for all $a,b\geq1$.
\end{lm}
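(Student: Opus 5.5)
The plan is a strong induction on $a+b$, working throughout at the fixed point $x_0$. The tool is the defining recurrence, applied to each factor of the product in turn, and then compared term by term with the recurrence for $P_{a+b}^g(x_0)$. Since $x_0>0$ and all $g(k)>0$, every $P_n^g(x_0)$ is positive, so multiplying inequalities by these values preserves their direction. The statement is symmetric in $a$ and $b$, so I assume $a\ge b\ge 1$; this ordering is exactly what will let the hypothesis apply.

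First I write
\begin{align*}
(a+b)P_a^g(x_0)P_b^g(x_0) = x_0\sum_{k=1}^{a} g(k)P_{a-k}^g(x_0)P_b^g(x_0) + x_0\sum_{j=1}^{b} g(j)P_{b-j}^g(x_0)P_a^g(x_0),
\end{align*}
by expanding $aP_a^g$ in the first copy and $bP_b^g$ in the second. On the other side, I split the recurrence for $P_{a+b}^g$ at $k=a$:
\begin{align*}
(a+b)P_{a+b}^g(x_0) = x_0\sum_{k=1}^{a} g(k)P_{a+b-k}^g(x_0) + x_0\sum_{j=1}^{b} g(a+j)P_{b-j}^g(x_0).
\end{align*}
It then suffices to compare the two first sums with each other, and likewise the two second sums, term by term.

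For the first sums, take $1\le k\le a-1$. The pair $(a-k,b)$ has both entries at least $1$ and a smaller sum, so the induction hypothesis gives $P_{a-k+b}^g(x_0)\le P_{a-k}^g(x_0)P_b^g(x_0)$. For $k=a$ the two terms coincide, because $P_0^g=1$. The base case $a+b=2$ is covered automatically: there only $k=a$ occurs in the first sum, so no induction hypothesis is needed.

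For the second sums, I need $g(a+j)\le g(j)P_a^g(x_0)$ for $1\le j\le b$. This is precisely the hypothesis with $m=a$ and $i=j$, which is admissible because $j\le b\le a$. I then multiply by $P_{b-j}^g(x_0)>0$.

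Adding the two comparisons and dividing by $(a+b)$ gives the claim. The only delicate point is this last step. The hypothesis controls the ratio $g(m+i)/g(i)$ only for $i\le m$, so the split must be made so that the "long" index $a$ plays the role of $m$. Choosing $a\ge b$ before splitting guarantees that the shift indices $j$ stay in the range $1\le j\le a$.
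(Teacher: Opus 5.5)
Your proof is correct and is essentially the paper's argument. The paper also inducts on $a+b$ with $a\ge b$ and applies the induction hypothesis to the first $a$ terms of the recurrence for $P_{a+b}^g(x_0)$. It then reduces to $\sum_{j=1}^b\bigl(g(j)P_a^g(x_0)-g(a+j)\bigr)P_{b-j}^g(x_0)\ge 0$, which is exactly your second comparison; your symmetric expansion of $(a+b)P_a^g(x_0)P_b^g(x_0)$ packages the paper's chain of equalities more cleanly and absorbs the base case. The positivity $x_0>0$ that you take for granted follows from the hypothesis at $m=1$, since $P_1^g(x_0)=x_0\ge g(2)>0$.
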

\begin{proof}
    The proof goes by induction on $n=a+b$. If $n=2$, then we have that
\begin{align*}
P_1^g(x_0)=x_0\geq\frac{g(2)}{g(1)}=g(2)\hspace{0.2cm}\text{and}\hspace{0.2cm}P_1^g(x_0)^2-P_2^g(x_0)=\frac{x_0}{2}\left(x_0-g(2)\right).
\end{align*}    
    
Thus, let us assume that $n=a+b\geq3$ and $a\geq b$. The induction hypothesis and the recursion formulae for the considered polynomials assert that
    \begin{align*}
        &P_a^{g}(x_0)P_b^{g}(x_0)-P_{a+b}^{g}(x_0)\\
        =&\frac{x_0}{a}\sum_{j=1}^ag(j)P_{a-j}^g(x_0)P_b^g(x_0)-\frac{x_0}{a+b}\sum_{j=1}^{a+b}g(j)P_{a+b-j}^g(x_0)\\
        \geq&\frac{x_0}{a}\sum_{j=1}^ag(j)P_{a-j}^g(x_0)P_b^g(x_0)-\frac{x_0}{a+b}\sum_{j=1}^{a}g(j)P_{a-j}^g(x_0)P_b^g(x_0)
        \\
        &\phantom{\frac{x_0}{a}\sum_{j=1}^ag(j)P_{a-j}^g(x_0)P_b^g(x_0)}-\frac{x_0}{a+b}\sum_{j=1}^{b}g(a+j)P_{b-j}^g(x_0)\\
        =&\frac{x_0}{a(a+b)}\left(b\sum_{j=1}^ag(j)P_{a-j}^g(x_0)P_{b}^g(x_0)-a\sum_{j=1}^bg(a+j)P_{b-j}^g(x_0)\right)\\
        =&\frac{1
        }{a+b}\left(
        bP_{a}^g(x_0)P_{b}^g(x_0)-x_{0}
        \sum_{j=1}^bg(a+j)P_{b-j}^g(x_0)\right)\\
        =&\frac{x_0}{a+b}\left(P_{a}^g(x_0)\sum_{j=1}^bg(j)P_{b-j}^g(x_0)-\sum_{j=1}^bg(a+j)P_{b-j}^g(x_0)\right)\\
        =&\frac{x_0}{a+b}\sum_{j=1}^b\left(g(j)P_{a}^g(x_0)-g(a+j)\right)P_{b-j}^g(x_0).
    \end{align*}
    Finally, the assumption from the statement together with the law of mathematical induction completes the proof.
\end{proof}

To deduce Theorem \ref{Theorem: General B-O (2)}, we also need the following property.

\begin{lm}\label{Lemma: g(n) 2}
    Under the same assumptions as in Lemma \ref{Lemma: g(n) (1)},  the inequality
    \begin{align*}
        P_a^{g}(x)P_b^{g}(x)
        \geq P_{a+b}^{g}(x)
    \end{align*}
   holds for all $a,b\geq1$ and $x\geq x_0$.
\end{lm}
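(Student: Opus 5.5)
The plan is to extend Lemma \ref{Lemma: g(n) (1)} from the single point $x_0$ to all $x\geq x_0$. I would do this by showing that the hypothesis of that lemma is inherited by every $x\ge x_0$, and then simply apply Lemma \ref{Lemma: g(n) (1)} at $x$ in place of $x_0$.

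The first step is monotonicity. All coefficients of $P_n^g(x)$ are nonnegative. This follows by induction from the recursion, since $g(k)>0$ and $P_0^g=1$, and in fact $P_n^g$ has zero constant term for $n\geq 1$ and positive leading coefficient $1/n!$. Hence $P_n^g$ is nondecreasing on $[0,\infty)$. Alternatively, Lemma \ref{Lemma: Recurrence for P_n^g} shows $\frac{\mathrm d}{\mathrm dx}P_n^g(x)=\sum_j \frac{g(j)}{j}P_{n-j}^g(x)>0$ for $x>0$, once positivity of all $P_k^g$ on $(0,\infty)$ is known, and that positivity is again immediate from the recursion. Thus for $x\ge x_0>0$ we get
\[
P_m^g(x)\geq P_m^g(x_0)\geq \max_{1\le i\le m}\frac{g(m+i)}{g(i)}\quad\text{for every } m\ge 1 .
\]
Here $x_0>0$ is automatic, because $P_1^g(x_0)=x_0\ge g(2)>0$.

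The second step is to apply Lemma \ref{Lemma: g(n) (1)} with $x$ playing the role of $x_0$. Its hypothesis is exactly the displayed inequality, so it yields $P_a^g(x)P_b^g(x)\ge P_{a+b}^g(x)$ for all $a,b\ge1$. I should check that the proof of Lemma \ref{Lemma: g(n) (1)} used nothing about $x_0$ beyond that hypothesis and positivity of the $P_k^g(x_0)$. In the final sum $\sum_j (g(j)P_a^g(x_0)-g(a+j))P_{b-j}^g(x_0)$, nonnegativity needs $P_a^g\ge g(a+j)/g(j)$ for $j\le b\le a$. This is covered by the max over $1\le i\le a$, and $P_{b-j}^g\ge0$ holds. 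So the argument transfers verbatim.

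There is no serious obstacle. The only point requiring care is the positivity and monotonicity of $P_n^g$ on $(0,\infty)$, which I would state explicitly as a short induction before invoking Lemma \ref{Lemma: g(n) (1)}.
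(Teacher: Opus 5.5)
Your proof is correct, but it takes a different route from the paper. The paper runs a second induction on $a+b$. Using the derivative formula of Lemma~\ref{Lemma: Recurrence for P_n^g} together with the induction hypothesis at every $x\ge x_0$, it shows
\[
\frac{\mathrm{d}}{\mathrm{d}x}\bigl(P_a^g(x)P_b^g(x)-P_{a+b}^g(x)\bigr)\geq\sum_{j=1}^{b}\Bigl(\frac{g(j)}{j}P_a^g(x)-\frac{g(a+j)}{a+j}\Bigr)P_{b-j}^g(x)>0 \qquad (x\ge x_0),
\]
and combines this with nonnegativity of the gap at $x_0$, which Lemma~\ref{Lemma: g(n) (1)} supplies. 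You instead note that the hypothesis of Lemma~\ref{Lemma: g(n) (1)} is upward closed in $x$: each $P_m^g$ has nonnegative coefficients, so it is nondecreasing on $[0,\infty)$. You then apply that lemma pointwise at each $x\ge x_0$. Since the lemma is stated for an arbitrary point satisfying its hypothesis, and $x>0$ follows from the case $m=1$, you do not even need to re-inspect its proof. Your argument is shorter and avoids the derivative recurrence altogether. It also makes explicit the monotonicity $P_a^g(x)\ge P_a^g(x_0)$ that the paper uses tacitly when its final sentence invokes ``the assumptions from the statement'' at a point $x\ne x_0$. The paper's argument gives slightly more. The gap $P_a^gP_b^g-P_{a+b}^g$ is strictly increasing on $[x_0,\infty)$. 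Moreover, the derivative step only requires the weaker bound $P_a^g(x)\ge\frac{j}{a+j}\cdot\frac{g(a+j)}{g(j)}$. So submultiplicativity verified at a single point by other means could be propagated to all larger $x$ under that weaker growth condition.
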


\begin{proof}
Once again we proceed by induction on $n=a+b$. For $n=2$, the required property follows from the proof of Lemma \ref{Lemma: g(n) (1)}. If $n=a+b\geq3$, then Lemma \ref{Lemma: g(n) (1)} boils down
to the problem
showing that $\frac{\mathrm{d
}}{\mathrm{d}x}\left(P_{a}^g(x)P_{b}^g(x)-P_{a+b}^g(x)\right)>0$ for $x\geq x_0$. Let us assume that $a\geq b$. Lemma \ref{Lemma: Recurrence for P_n^g} and the induction hypothesis ensure that 
    \begin{align*}
        &\frac{\mathrm{d}
        }{\mathrm{d}x}\left(P_{a}^g(x)P_{b}^g(x)-P_{a+b}^g(x)\right)\\
        =&\sum_{j=1}^a\frac{g(j)}{j}P_{a-j}^g(x)P_{b}^g(x)+\sum_{j=1}^b\frac{g(j)}{j}P_{a}^g(x)P_{b-j}^g(x)-\sum_{j=1}^{a+b}\frac{g(j)}{j}P_{a+b-j}^g(x)\\
        \geq&\sum_{j=1}^a\frac{g(j)}{j}P_{a-j}^g(x)P_{b}^g(x)+\sum_{j=1}^b\frac{g(j)}{j}P_{a}^g(x)P_{b-j}^g(x)\\
        &{}-\sum_{j=1}^{a}\frac{g(j)}{j}P_{a-j}^g(x)P_{b}^g(x)-\sum_{j=1}^b\frac{g(a+j)}{a+j}P_{b-j}^g(x)\\
        =&\sum_{j=1}^b\left(\frac{g(j)}{j}P_{a}^g(x)-\frac{g(a+j)}{a+j}\right)P_{b-j}^g(x).
    \end{align*}
    Hence, we obtain that
    \begin{align*}
        &\frac{\mathrm{d}
        }{\mathrm{d}x}\left(P_{a}^g(x)P_{b}^g(x)-P_{a+b}^g(x)\right)\geq\sum_{j=1}^b\left(\frac{g(j)}{j}P_{a}^g(x)-\frac{g(a+j)}{a+j}\right)P_{b-j}^g(x).
    \end{align*}
    Finally, the assumptions from the statement end the proof.
\end{proof}
Combining both Lemma \ref{Lemma: g(n) (1)} and Lemma \ref{Lemma: g(n) 2} ends the proof of Theorem \ref{Theorem: General B-O (2)}.

Now, we are in the position to deal with the main goal of this section. Thanks to our criterion, i.e., Theorem  \ref{Theorem: General B-O (2)}, it suffices to examine the property \eqref{eq:star-n}
for the considered family of arithmetic functions  $g_\ell(n)$ satisfying \eqref{eq:gl-growth}, 
and for all values of $\ell,n\in\mathbb{N}$. In such a setting we have the following estimates for the maximal value \eqref{eq:star-n}.

\begin{lm}\label{lm:max}
Let $g_\ell(n)$ be a normalized sequence satisfying \eqref{eq:gl-growth}. 
Then
\[
\max _{1\leq k\leq n}\frac{g_\ell\left( n+k\right) }{g_\ell\left( k\right) }
\leq \left\{
\begin{array}{ll}
2n, & \ell =0,\\
\left( n+1\right) ^{\ell +1}, & \ell \geq 1.
\end{array}
\right.
\]
\end{lm}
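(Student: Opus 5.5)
We bound numerator and denominator separately using \eqref{eq:gl-growth}. Note that \eqref{eq:gl-growth} was stated for $\ell\in\N$; for the case $\ell=0$ we read it the same way, i.e.\ $1\leq g_0(n)\leq n$. For every $1\leq k\leq n$ we have $g_\ell(n+k)\leq (n+k)^{\ell+1}$ and $g_\ell(k)\geq k^{\ell}$, so
\[
\frac{g_\ell(n+k)}{g_\ell(k)}\leq \frac{(n+k)^{\ell+1}}{k^{\ell}}=(n+k)\left(\frac{n+k}{k}\right)^{\ell}=(n+k)\left(1+\frac{n}{k}\right)^{\ell}.
\]
The whole task is then to maximize this explicit function of $k$ over $1\leq k\leq n$.

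For $\ell=0$ the bound reduces to $n+k$. It is increasing in $k$, so its maximum over $1\leq k\leq n$ is attained at $k=n$ and equals $2n$, as claimed.

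For $\ell\geq1$ we study $h(k)=(n+k)^{\ell+1}k^{-\ell}$ on the real interval $[1,n]$. Its logarithmic derivative is
\[
\frac{\ell+1}{n+k}-\frac{\ell}{k}=\frac{k-\ell n}{k(n+k)}.
\]
Since $\ell\geq1$ and $k\leq n$, we have $k\leq \ell n$, so this quantity is $\leq 0$ on the whole interval. Hence $h$ is decreasing on $[1,n]$, and its maximum is at $k=1$, where $h(1)=(n+1)^{\ell+1}$. This gives the stated bound.

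The only step needing care is this monotonicity check. It is exactly where the cases split: for $\ell=0$ the function increases, while for $\ell\geq1$ the critical point $k=\ell n$ lies at or beyond the right endpoint $n$, so the function decreases. Everything else is direct substitution of \eqref{eq:gl-growth}.
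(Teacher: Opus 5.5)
Your proof is correct and is essentially the paper's argument. Both bound the ratio by $(n+k)^{\ell+1}/k^{\ell}=\left(\frac{n}{k}+1\right)^{\ell+1}k$ via \eqref{eq:gl-growth}, then use the sign of the derivative (critical point $k=\ell n$) to show it is increasing for $\ell=0$ and non-increasing on $[1,n]$ for $\ell\geq 1$; using the logarithmic derivative and stating the reading $1\leq g_0(n)\leq n$ explicitly are only cosmetic differences.
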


\begin{proof}
By \eqref{eq:gl-growth} we obtain
$\max _{1\leq k\leq n}\frac{g_\ell\left( n+k\right) }{g_\ell\left( k\right) }
\leq \max _{1\leq k\leq n}\left( \frac{n}{k}+1\right) ^{\ell +1}k$.
We pass to the continuous problem for $x\in \left[ 1,n\right] $ and consider
$\max _{1\leq x\leq n}\left( \frac{n}{x}+1\right) ^{\ell +1}x$. By
differentiation we obtain
$\frac{\mathrm{d}}{\mathrm{d}x}\left( \frac{n}{x}+1\right) ^{\ell +1}x
=\left( \ell +1\right) \left( \frac{n}{x}+1\right) ^{\ell }
\left( -\frac{n}{x^{2}}\right) x+\left( \frac{n}{x}+1\right) ^{\ell +1}
={\left( \frac{n}{x}+1\right) ^{\ell }
}\frac{1}{x
}\left(
x
-
\ell
{n}
\right) $.
For $\ell
=0$ this is positive for $x>0$. For $\ell \geq 1$
this is non-positive for $x\leq n\leq
\ell
n$.
Therefore, the maximal value is $2
n$ for $\ell
=0$ and
$\left( n+1\right) ^{\ell +1}$ for $\ell \geq 1$.
\end{proof}

At this point, let us consider the cases when $\ell\geq1$ and $1\leq n \leq 8$. In order to do that, one needs to investigate \eqref{eq:star-n} for all $1\leq n\leq8$ one by one (the case of $n=1$ follows from our general assumption, i.e., $x_0=g_\ell(2)$). Since all of the cases are very similar to each other, we only examine the situations when $2\leq n\leq 4$ leaving the remaining ones as an easy exercise to the interested reader. Recall that $g_\ell(2)\geq2^\ell$. By Lemma \ref{lm:max}, we just need to verify the last inequality in each line of

\begin{align*}
P_2^{g_\ell}(x_0)&=x_0^2\geq 2^{2\ell}\geq3^{\ell+1},\\
P_3^{g_\ell}(x_0)&=\frac{x_0}{3}\left(x_0^2+x_0^2+g_\ell(3)\right)\geq\frac{2\cdot8^\ell+6^\ell}{3}\geq4^{\ell+1},\\
P_4^{g_\ell}(x_0)&\geq\frac{5\cdot16^\ell}{12}+\frac{12^\ell}{3}+\frac{8^\ell}{4}\geq5^{\ell+1}.
\end{align*}

One can check that the required inequality for $P_{n}
^{g_{\ell }}\left( x_{0}\right)
$ ($n\in\{2,3,4\}$) is satisfied for each $\ell\geq6-n$. For $5\leq n\leq 8$, it turns out that the appropriate inequalities are fulfilled for all values of $\ell\geq2$.

Now, we restrict ourselves to the cases when $\ell\geq2$ (the case of $\ell=1$ is special and is considered separately). We can proceed by the induction on $n$ to prove that the inequality $P_n^{g_\ell}(x_0)\geq(n+1)^{\ell+1}$ is satisfied for every $n\geq 4$. The cases of $4\leq n\leq8$ have been already verified. Hence, we assume that $n\geq9$. The induction hypothesis together with our estimates for $g_\ell(k)$ guarantees that
\begin{align*}
P_n^{g_\ell}(x_0)=\frac{x_0}{n}\sum_{k=1}^{n}g_\ell(k)P_{n-k}^{g_\ell}(x_0)>\frac{x_0}{n}\sum_{k=1}^{n-4}k^\ell (n-k+1)^{\ell+1}.
\end{align*}
If we consider the function $x^\ell(n-x+1)^{\ell+1}$, then its derivative $\frac{\mathrm{d}
}{\mathrm{d}x}x^\ell(n-x+1)^{\ell+1}=x^{\ell-1}(n-x+1)^\ell\left(\ell(n+1)-(2\ell+1)x\right)$ is positive for all $0<x<\ell(n+1)/(2\ell+1)$. Since $\ell\geq2$ and $n\geq9$, it follows that $2n/5<\ell(n+1)/(2\ell+1)$ and $n-4>2n/5+1$. Therefore, we obtain that
\begin{align*}
\frac{x_0}{n}\sum_{k=1}^{n-4}k^\ell (n-k+1)^{\ell+1}>\frac{x_0}{n}\frac{2n}{5}n^{\ell+1}\geq\frac{(2n)^{\ell+1}}{5},
\end{align*}
where the last inequality is a consequence of $x_0\geq
2^\ell$. Thus, it suffices to prove that 
\begin{align*}
\frac{(2n)^{\ell+1}}{5}\geq(n+1)^{\ell+1}\hspace{0.5cm}\text{or, equivalently,}\hspace{0.5cm}\left(2-\frac{2}{n+1}\right)^{\ell+1}\geq5.
\end{align*}
If we put $n=9$ (that is the worst possible case), then the last of the inequalities holds for all $\ell\geq2$, which concludes the proof in the case of $\ell\geq2$.

Next, we pass to the separate case, i.e., $\ell=1$. For $n\geq1$, we have that
\begin{align*}
P_n^{g_1}(x)
\geq\sum_{j=1}^n\frac{x^k}{k!}\binom{n-1}{k-1}=
\frac{x}{n}L_{n-1}^{(1)}(-x),
\end{align*}
which is a consequence of 
$g_{1}\left( n\right) \geq \psi _{1}\left( n\right) $. Since we have already dealt with the cases when $n\leq8$, let us require that $n\geq9$ and set
\begin{align*}
t(n):=\sum_{k
=1}^5\frac{2^k}{k!}\binom{n-1}{k-1}-(n+1)^2=\frac{n^4}{90}-\frac{11n^2}{18}-\frac{4 n}{3}-\frac{1}{15}\leq P_n^{g_1}(x_0)-(n+1)^2.
\end{align*}
It is transparent that the third derivative $\frac{
\mathrm{d}^3}{\mathrm{d}
n^{3}}t(n)$ is positive for all values of $n>0$. Moreover, $\frac{
\mathrm{d}^2}{\mathrm{d}
n^{2}}t(9)>0$, $\frac{\mathrm{d}
}{\mathrm{d}n}t(9)>0$ and $t(9)>0$, which implies that for $\ell=1$ the difference $P_n^{g_1}(x_0)-(n+1)^2$ is positive for each $n\geq9$, and for every $n\leq8$ we need to require some extra conditions that could guarantee the inequality
\begin{align*}
P_n^{g_1}(x_0)\geq\max_{1\leq k\leq n}\frac{g_1(n+k)}{g_1(k)}.
\end{align*}
This completes the proof of Theorem \ref{thm:main}.

\section{Applications}

\subsection{Sequences related to the partition function}

In this subsection, we use Theorem \ref{thm:main} in practice to prove two generalizations of the submultiplicative property of D'Arcais polynomials \cite{HN6}. The first of them is related to polynomials $P_n^{\sigma_d}(x)$ with $\sigma_d(n):=\sum_{j\mid
n}j^d$. Note that $
P_n^{\sigma}(x)$ corresponds to the D'Arcais polynomials arising from the partition function. On the other hand, it turns out that $P_n^{\sigma_2}(x)$ are polynomials associated
with the plane partition function $pp(n):=P_n^{\sigma_2}(1)$. Unfortunately, there is no
known combinatorial interpretation of $P_n^{\sigma_d}(1)$ for $d\geq3$. Because of that, we decide to apply Theorem \ref{thm:main} to
another family of polynomials. Let $\lambda_{\ell}(n)$ be defined recursively by $\lambda_0(n):=1$ and
\begin{equation*}
\lambda_\ell(n):=\sum_{j|n}j\lambda_{\ell-1}(j) \hspace{0.2cm}\text{for }\ell\geq1.
\end{equation*}
In such a setting, we have that $P_{n}^{\sigma }
\left( x\right)
=P_n^{\lambda_1}(x)$, and $P_n^{\lambda_{\ell}}(x)$ possesses an interesting combinatorial interpretation for each $\ell\geq0$. More accurately, let us put
\begin{align*}
    C_{\ell,n}:=\left\{\left(\pi_1,\pi_2,\ldots,\pi_\ell\right)\in S_n^\ell:\pi_i\pi_j=\pi_j\pi_i\text{ for every }1\leq i,j\leq \ell\right\},
\end{align*}
where $S_n$ denotes the symmetric group. In other words, $\#C_{\ell,n}$ is the number of commuting $\ell$-tuples in $S_n$. Bryan and Fulman \cite{BF} proved that the equations
\begin{align*}
    \sum_{n=0}^\infty P_n^{\lambda_\ell}(x)q^n=\prod_{n=1}^\infty\frac{1}{(1-q^n)^{x\lambda_{\ell-1}(n)}}=\exp{\left(x\sum_{n=1}^\infty \lambda_{\ell}(n)\frac{q^n}{n}\right)}
\end{align*}
are valid for every positive integer $\ell\geq1$, and $P_n^{\lambda_\ell}(1)=\#C_{\ell,n}/n!$. For more information concerning the set of commuting permutations $C_{\ell,n}$ we encourage the reader to see \cite{ABD, AHN}.

\begin{re}\label{remark:p_A}{\rm
It is worth noting that the values of $P_n^{\lambda_\ell}(k)$ for $k\in\mathbb{N}$ have an interesting combinatorial interpretation in the context of the so-called $A$-partition functions. Let us recall that for a fixed multiset $A$ of positive integers such that each number $j$ occurs only finitely many times in $A$, an $A$-partition is a partition with parts in $A$. Moreover, two $A$-partitions of a given $n$ are considered the same if they differ only in the order of their parts. The total number of $A$-partitions of is denoted by $p_A(n)$. We have that $P_n^{\lambda_\ell}(k)=p_{A_k}(n)$ for $k\in\mathbb{N}$, where $A_k$ is such a multiset that each number $j$ occurs exactly $k\lambda_{\ell-1}(j)$ times in $A_k$.
}\end{re}

Now, we are in the position to exhibit the submultiplicative properties of $P_n^{\sigma_d}(x)$ and $P_n^{\lambda_\ell}(x)$, respectively.

\begin{cor}\label{Corollary: B-O (1) for sigma}
    Let $d\geq1$. The inequality
    \begin{align*}
        P_a^{\sigma_d}(x)P_b^{\sigma_d}(x)\geq P_{a+b}^{\sigma_d}(x)
    \end{align*}
    holds true for all natural numbers $a$ and $b$, and $x\geq 2^{d}+1$.
\end{cor}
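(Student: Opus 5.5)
The plan is to derive the corollary directly from Theorem~\ref{thm:main} with $\ell=d$ and $g_\ell=\sigma_d$; in the Corollary, $x$ is the real variable and $a,b$ are the indices. Clearly $\sigma_d(1)=1$, so $\sigma_d$ is normalized.

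\textbf{Step 1: the growth condition.} I first check \eqref{eq:gl-growth}. The lower bound $n^d\le\sigma_d(n)$ holds because $n$ is itself a divisor of $n$. For the upper bound, each of the $\tau(n)\le n$ divisors $j$ of $n$ satisfies $j^d\le n^d$, so
\begin{equation*}
\sigma_d(n)\le \tau(n)\,n^d\le n^{d+1}.
\end{equation*}
Hence Theorem~\ref{thm:main} applies with $x_0=\sigma_d(2)=2^d+1$, which is exactly the threshold in the statement. It remains to verify \eqref{eq:star-n} at $x_0$ for $1\le n\le n_d$.

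\textbf{Step 2: the case $n=1$.} This case always holds, since $P_1^{\sigma_d}(x_0)=x_0=\sigma_d(2)/\sigma_d(1)$. So for $d\ge 4$ nothing further is needed.

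\textbf{Step 3: the finite checks for $d\in\{1,2,3\}$.} These are finitely many explicit numerical inequalities. For each case I compute $P_n^{\sigma_d}(x_0)$ from the recursion $P_n^g(x)=\frac{x}{n}\sum_{k=1}^n g(k)P_{n-k}^g(x)$, and compare it with $\max_{1\le k\le n}\sigma_d(n+k)/\sigma_d(k)$, computed from the divisor sums.
\begin{itemize}
\item[(i)] \emph{$d=3$, $x_0=9$, $n=2$.} We have $P_2^{\sigma_3}(9)=\frac92(9+9)=81$. The competing ratios are $\sigma_3(3)=28$ and $\sigma_3(4)/\sigma_3(2)=73/9$, so the inequality holds.
\item[(ii)] \emph{$d=2$, $x_0=5$, $n=2$.} We have $P_2^{\sigma_2}(5)=25$. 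The maximum is $\sigma_2(3)=10$, while $\sigma_2(4)/\sigma_2(2)=21/5$.
\item[(iii)] \emph{$d=2$, $x_0=5$, $n=3$.} $P_3^{\sigma_2}(5)$ is over $100$. The ratios are at most $\sigma_2(4)=21$, together with $\sigma_2(5)/5$ and $\sigma_2(6)/\sigma_2(3)=5$.
\item[(iv)] \emph{$d=1$, $x_0=3$, $2\le n\le 8$.} The values $P_n^{\sigma}(3)$ are the $3$-colored partition numbers $9,22,51,108,221,429,810$. The ratios $\sigma(n+k)/\sigma(k)$ for $n\le 8$ never exceed roughly $\sigma(9)=13$ or $\sigma(8)=15$. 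Indeed the $k=1$ term $\sigma(n+1)$ dominates, and the other terms are small, e.g.\ $\sigma(12)/\sigma(4)=4$.
\end{itemize}
I would present the $d=1$ data as a short table of $P_n^{\sigma}(3)$ against $\max_k \sigma(n+k)/\sigma(k)$.

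\textbf{Conclusion and main obstacle.} With these checks done, Theorem~\ref{thm:main} yields the inequality for all $x\ge 2^d+1$ and all $a,b\ge1$. There is no real conceptual obstacle. The step needing most care is the $d=1$ verification up to $n=8$, where the gap between the two sides is smallest relative to their size. One also has to be sure that the hypotheses of Theorem~\ref{thm:main} are met exactly; in particular, $x_0=\sigma_d(2)$ is attained with equality when $n=1$.
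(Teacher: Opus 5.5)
Your proposal is correct and follows the paper's own route. You establish $n^d\le\sigma_d(n)\le n^{d+1}$, where your $\tau(n)\le n$ argument is a cleaner substitute for the paper's integral estimate and handles $d=1$ uniformly; you then invoke Theorem~\ref{thm:main} with $x_0=\sigma_d(2)=2^d+1$ and check the finitely many conditions \eqref{eq:star-n} for $d\in\{1,2,3\}$ by hand, and your numbers check out apart from one trivial slip: $P_3^{\sigma_2}(5)=\frac{5}{3}(25+25+10)=100$ exactly rather than over $100$, which still exceeds the maximal ratio $21$.
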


\begin{cor}\label{Corollary: B-O (2) for gl}
    Let $\ell\geq1$. The inequality
    \begin{align*}
        P_a^{\lambda_\ell}(x)P_b^{\lambda_\ell}(x)\geq P_{a+b}^{\lambda_\ell}(x)
    \end{align*}
    holds true for all natural numbers $a$ and $b$, and $x\geq 2^{\ell+1}-1$.
\end{cor}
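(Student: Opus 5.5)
We will deduce this from Theorem~\ref{thm:main}, applied with $g_\ell=\lambda_\ell$. The steps are: verify the growth condition \eqref{eq:gl-growth} for $\lambda_\ell$, compute $x_0=\lambda_\ell(2)$, and check the finitely many inequalities \eqref{eq:star-n} that Theorem~\ref{thm:main} leaves open for small $\ell$. First, $\lambda_\ell(1)=1$ for all $\ell$ by induction, so $\lambda_\ell$ is normalized. Also, for a prime $p$ we have $\lambda_\ell(p)=1+p\lambda_{\ell-1}(p)$, hence
\[
\lambda_\ell(p)=1+p+\dots+p^\ell .
\]
In particular $x_0=\lambda_\ell(2)=2^{\ell+1}-1$, which matches the threshold in the statement.

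\emph{Lower bound.} Keeping only the divisor $j=n$ gives $\lambda_\ell(n)\geq n\lambda_{\ell-1}(n)$. Iterating from $\lambda_0=1$ yields $\lambda_\ell(n)\geq n^\ell$.

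\emph{Upper bound.} I would use multiplicativity. The function $\lambda_\ell=\mathbf 1*(\mathrm{id}\cdot\lambda_{\ell-1})$ is a Dirichlet convolution of multiplicative functions, so it is multiplicative. It therefore suffices to show $\lambda_\ell(p^k)\leq p^{k(\ell+1)}$ for prime powers.
\begin{itemize}
\item Step one: by induction on $\ell$, prove $\lambda_\ell(p^k)\leq C_\ell\,p^{k\ell}$ with $C_\ell=\prod_{j=1}^{\ell}\frac{p^j}{p^j-1}$. This follows from
\[
\lambda_\ell(p^k)=\sum_{i=0}^k p^i\lambda_{\ell-1}(p^i)\leq C_{\ell-1}\sum_{i=0}^k p^{i\ell}\leq C_{\ell-1}\frac{p^\ell}{p^\ell-1}\,p^{k\ell}.
\]
\item Step two: bound the constant uniformly by $C_\ell\leq\prod_{j\geq1}(1-2^{-j})^{-1}<4$. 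Then $\lambda_\ell(p^k)\leq p^{k(\ell+1)}$ whenever $p^k\geq4$.
\item Step three: the remaining prime powers $p^k\in\{2,3\}$ are primes, and there $\lambda_\ell(p)=1+p+\dots+p^\ell\leq p^{\ell+1}$ holds directly.
\end{itemize}
Hence $n^\ell\leq\lambda_\ell(n)\leq n^{\ell+1}$ for all $n$.

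By Theorem~\ref{thm:main} it remains to confirm \eqref{eq:star-n} at $x_0=2^{\ell+1}-1$ for $1\leq n\leq n_\ell$. The case $n=1$ is automatic, since $P_1^{\lambda_\ell}(x_0)=x_0=\lambda_\ell(2)$. So nothing further is needed for $\ell\geq4$. For $\ell=3$ ($n=2$), $\ell=2$ ($n\leq3$) and $\ell=1$ ($n\leq8$) I would compute $P_n^{\lambda_\ell}(x_0)$ explicitly from the recursion, using the exact small values of $\lambda_\ell$. For $\ell=1$ these are $\lambda_1=\sigma$ and $x_0=3$. I would compare the result with the exact maximum $\max_k\lambda_\ell(n+k)/\lambda_\ell(k)$, not with the crude bound of Lemma~\ref{lm:max}.

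The main obstacle is the case $\ell=1$, $n\leq8$ at $x=3$, where the margins may be small. If a direct check of \eqref{eq:star-n} failed there, I would fall back on the known D'Arcais result \cite{HN6}: since $P_n^{\lambda_1}=P_n^{\sigma}$, it gives submultiplicativity for $x\geq3$ directly. The uniform bound on $C_\ell$ is the other point needing care, but it is routine.
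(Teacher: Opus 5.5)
Your proposal is correct and follows the paper's own argument. Both apply Theorem~\ref{thm:main} with $x_0=\lambda_\ell(2)=2^{\ell+1}-1$ and the bounds $n^\ell\le\lambda_\ell(n)\le n^{\ell+1}$, then check \eqref{eq:star-n} by hand for $\ell\in\{1,2,3\}$.

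The only difference is that you prove the upper bound yourself, via multiplicativity and $\prod_{j\ge1}(1-2^{-j})^{-1}<4$, where the paper cites \cite{ABD}. The finite checks pass with room to spare (e.g.\ $P_2^{\sigma}(3)=9\ge4$, $P_3^{\lambda_2}(7)=259\ge35$, $P_2^{\lambda_3}(15)=225\ge40$), so the fallback to \cite{HN6} is never needed.
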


Both of the corollaries follow from Theorem \ref{thm:main}. However, to apply that result, we first need to observe that $\sigma(n)=\lambda_{1}(n)$,
\begin{align*}
    n^d\leq\sigma_d(n)=n^d\sum_{j\mid
n}\frac{1}{j^d}\leq n^d+n^d\int_1^n\frac{\mathrm{d}x}{x^d}=\frac{n}{d-1}\left(dn^{d-1}-1\right)<n^{d+1},
\end{align*}
and
\begin{align*}
    n^{\ell}\leq \lambda_{\ell}(n)\leq n^{\ell+1}
\end{align*}
hold for $\ell,n\in \mathbb{N}$ and $d\in\mathbb{N}_{\geq2}$, where the last of the inequalities can be found in \cite{ABD}. Thus, in order to conclude both of the corollaries it suffices to check the additional conditions for $d,\ell\in\{1,2,3\}$, what can be done one by one.

\subsection{The case of $\psi_\ell$}

Here, we investigate the submultiplicative property for the family of polynomials $P_n^{\psi_\ell}(x)$ associated
with the sequences $\psi_\ell(n)=n^\ell$ (introduced in Section 2). Thanks to Corollary \ref{Corollary: g_1(n)} and Theorem \ref{thm:main}, it suffices to examine \eqref{eq:star-n} for $\ell=1$ (and $1\leq n\leq 8$), $\ell=2$ (and $1\leq n\leq 3$), and $\ell=3$ (and $1\leq n\leq 2$), what can be done one by one. This leads to the following.
\begin{cor}
Let $\ell\geq0$. The inequality
    \begin{align*}
        P_a^{\psi_\ell}(x)P_b^{\psi_\ell}(x)\geq P_{a+b}^{\psi_\ell}(x)
    \end{align*}
    holds true for all natural numbers $a$ and $b$, and $x\geq 2^{\ell}$.
\end{cor}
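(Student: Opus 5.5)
The plan is to split by $\ell$. For $\ell=0$ I invoke Corollary \ref{Corollary: g_1(n)} directly. For $\ell\geq 1$ I apply Theorem \ref{thm:main} to $g_\ell=\psi_\ell$. The growth hypothesis \eqref{eq:gl-growth} holds trivially, since $n^\ell\leq n^\ell\leq n^{\ell+1}$. The threshold is $x_0=\psi_\ell(2)=2^\ell$, exactly the bound in the statement.

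By Theorem \ref{thm:main}, it then remains to verify \eqref{eq:star-n} at $x_0=2^\ell$ for $1\leq n\leq n_\ell$.
\begin{itemize}
\item For $\ell\geq4$, $n_\ell=1$. The required inequality is $P_1^{\psi_\ell}(x_0)=x_0=2^\ell\geq \psi_\ell(2)/\psi_\ell(1)$, which holds with equality.
\item For $\ell\in\{1,2,3\}$, finitely many cases remain: $(\ell,n)$ with $n\leq 8$, $n\leq 3$, $n\leq 2$ respectively.
\end{itemize}

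Since $\psi_\ell$ is explicit, the right-hand side of \eqref{eq:star-n} can be computed exactly. The function $k\mapsto ((n+k)/k)^\ell=(1+n/k)^\ell$ is decreasing in $k$, so the maximum is attained at $k=1$ and equals $(n+1)^\ell$. This is sharper than the bound in Lemma \ref{lm:max}. For $\ell=1$ it suffices to check $P_n^{\psi_1}(2)\geq n+1$ for $n\leq 8$. Using the closed form
\begin{align*}
P_n^{\psi_1}(x)=\sum_{k=1}^n \frac{x^k}{k!}\binom{n-1}{k-1},
\end{align*}
the $k=1$ and $k=2$ terms at $x=2$ already give $2+2(n-1)=2n\geq n+1$. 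For $\ell=2$ at $x_0=4$: $P_2=\tfrac{4}{2}(4+4)=16\geq 9$, and $P_3=\tfrac{4}{3}(16+4\cdot 4+9)=\tfrac{164}{3}\geq 16$. For $\ell=3$ at $x_0=8$: $P_2=\tfrac{8}{2}(8+8)=64\geq 27$.

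The main obstacle is to confirm that Theorem \ref{thm:main}'s reduction genuinely covers every $n>n_\ell$ for these particular $\ell$. If any doubt remains there, I would bypass it using the exact maximum $(n+1)^\ell$ and the criterion Theorem \ref{Theorem: General B-O (2)} directly, proving $P_n^{\psi_\ell}(2^\ell)\geq (n+1)^\ell$ for all $n$.
\begin{itemize}
\item For $\ell=1$ this is the bound $2n\geq n+1$ above.
\item For $\ell\geq 2$, induct on $n$ using the recursion. Keep only the $k=1$ term, $\tfrac{x_0}{n}P_{n-1}$, and the $k=n$ term, $\tfrac{x_0}{n}n^\ell$. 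Together they give
\begin{align*}
P_n^{\psi_\ell}(2^\ell)\geq \frac{2^\ell}{n}\left(n^\ell+(n)^{\ell}\right)\geq (n+1)^\ell
\end{align*}
whenever $2^{\ell+1}n^{\ell-1}\geq (n+1)^\ell$. This holds for small $n$ and can be patched by adding the middle terms for larger $n$, in the manner of the proof of Theorem \ref{thm:main}.
\end{itemize}
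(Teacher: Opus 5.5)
Your proof is correct and follows the paper's own route. You handle $\ell=0$ via Corollary~\ref{Corollary: g_1(n)} and $\ell\geq1$ via Theorem~\ref{thm:main} with $x_0=\psi_\ell(2)=2^\ell$, leaving only the finitely many checks of \eqref{eq:star-n} for $n\leq n_\ell$, which your explicit computations with the exact maximum $(n+1)^\ell$ settle correctly.

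The fallback paragraph is unnecessary, since Theorem~\ref{thm:main} already covers all $n>n_\ell$. As written it would also not work on its own: the two-term bound $2^{\ell+1}n^{\ell-1}\geq(n+1)^\ell$ fails for large $n$, so the middle terms you only allude to would really be needed.
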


\subsection{The overpartition function}

As the last example we examine the submultiplicative property for the polynomization of the overpartition function due to Li \cite{Li}. More precisely, we consider the family of polynomials $P_{n}^{\bar{\sigma}}(x)$, where $\bar{\sigma}(2^ml):=2^{m+1}\sigma(l)$ for any non-negative integer $m$ and an odd number $l\geq1$.

At
first glance, it seems that we can not simply use neither Theorem \ref{thm:main} nor Theorem \ref{Theorem: General B-O (2)} to obtain the required phenomenon as $\bar{\sigma}(1)=2$. However, it turns out that we may consider a small modification of the function $\bar{\sigma}(n)$. Let us just set $\hat{\sigma}(n):=\bar{\sigma}(n)/2.$ In such a setting one can easily verify that $P_n^{\bar{\sigma}}(x)=P_n^{\hat{\sigma}}(2x)$ for every non-negative integer $n$ and $x\in\mathbb{R}$. Also it is clear that $\hat{\sigma}(1)=1$. Moreover, for every natural number $n=2^ml$ (where $m$ and $l$ are as before) we have that
\begin{equation*}\label{bounds hat:sigma}
n\leq\hat{\sigma}(n)=\hat{\sigma}(2^ml)=2^m\sigma(l)\leq 2^ml(1+\ln{l})\leq2^ml^2\leq n^2.
\end{equation*}
Because of this, we can apply Theorem \ref{thm:main} with $x_0:=2$, and deduce the following.

\begin{thm}[Li]\ \\
The inequality
\begin{equation*}
P_{a}^{\bar{\sigma}}(x)P_{b}^{\bar{\sigma}}(x)\geq P_{a+b}^{\bar{\sigma}}(x)
\end{equation*}
holds true for all natural numbers $a$ and $b$, and $x\geq1$. 
\end{thm}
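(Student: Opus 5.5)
The plan is to reduce the statement to the case already covered for $\hat{\sigma}$, using the rescaling identity $P_n^{\bar{\sigma}}(x)=P_n^{\hat{\sigma}}(2x)$. With $y=2x$, the claim for $x\geq 1$ is equivalent to
\[
P_a^{\hat{\sigma}}(y)\,P_b^{\hat{\sigma}}(y)\geq P_{a+b}^{\hat{\sigma}}(y)\qquad\text{for all } a,b\geq 1,\ y\geq 2 .
\]
I would first verify the identity itself by induction on $n$. The defining recursion gives
\[
P_n^{\bar{\sigma}}(x)=\frac{x}{n}\sum_{k=1}^n 2\hat{\sigma}(k)\,P_{n-k}^{\bar{\sigma}}(x)=\frac{2x}{n}\sum_{k=1}^n \hat{\sigma}(k)\,P_{n-k}^{\hat{\sigma}}(2x)=P_n^{\hat{\sigma}}(2x).
\]

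Next I would check that $\hat{\sigma}$ satisfies the hypotheses of Theorem \ref{thm:main} with $\ell=1$. First, $\hat{\sigma}(1)=\sigma(1)=1$. Second, the sandwich $n\leq\hat{\sigma}(n)\leq n^2$ holds; this follows from the displayed chain, where I would replace the bound $\sigma(l)\leq l(1+\ln l)$ by the simpler $l\leq\sigma(l)\leq l^2$ for odd $l$, together with $2^m\leq 2^{2m}$. Third, $x_0=\hat{\sigma}(2)=2\sigma(1)=2$. Theorem \ref{thm:main} then yields submultiplicativity for $y\geq 2$, provided \eqref{eq:star-n} holds at $x_0=2$ for $1\leq n\leq n_1=8$.

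The main work is checking those eight finite inequalities
\[
P_n^{\hat{\sigma}}(2)\geq \max_{1\leq k\leq n}\frac{\hat{\sigma}(n+k)}{\hat{\sigma}(k)},\qquad 1\leq n\leq 8 .
\]
For this I would tabulate $\hat{\sigma}(1),\dots,\hat{\sigma}(16)=1,2,4,4,6,8,8,8,13,12,12,16,14,16,24,16$. Then I would compute $P_n^{\hat{\sigma}}(2)$ recursively, noting that $P_n^{\hat{\sigma}}(2)=P_n^{\bar{\sigma}}(1)=\bar{p}(n)$, the overpartition numbers $2,4,8,14,24,40,64,100$. Finally I would compare each value with the maximum ratio on the right.

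For example:
\begin{itemize}
\item $n=1$: the maximum is $2$ and $\bar{p}(1)=2$;
\item $n=2$: the maximum is $\max(4,2)=4$ and $\bar{p}(2)=4$;
\item $n=3$: the maximum is $\max(4,3,2)=4\leq 8$;
\item for larger $n$ the right-hand side is at most $(n+1)^2$ by Lemma \ref{lm:max}, but it is actually far smaller (e.g.\ $n=8$ gives $\hat{\sigma}(15)/\hat{\sigma}(7)=3$, versus $100$).
\end{itemize}

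I expect the only delicate points to be $n=1,2$, where equality occurs; there the inequality in \eqref{eq:star-n} is non-strict, which suffices. Once these cases are checked, Theorem \ref{thm:main} (or directly Theorem \ref{Theorem: General B-O (2)}, whose hypothesis for $n\geq 9$ is guaranteed by the $\ell=1$ argument) gives the result for $y\geq 2$, i.e.\ for $x\geq 1$.
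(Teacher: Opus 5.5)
Your proposal is correct and follows the paper's route: you rescale via $P_n^{\bar{\sigma}}(x)=P_n^{\hat{\sigma}}(2x)$, check $\hat{\sigma}(1)=1$ and $n\le\hat{\sigma}(n)\le n^2$, and apply Theorem~\ref{thm:main} with $\ell=1$ and $x_0=\hat{\sigma}(2)=2$; you also carry out the eight finite checks of \eqref{eq:star-n}, which the paper leaves implicit, and your table of $\hat{\sigma}$ and values $\bar p(n)$ are right. One small slip: for $n=8$ the maximal ratio is $\hat{\sigma}(9)/\hat{\sigma}(1)=13$, not the ratio $\hat{\sigma}(15)/\hat{\sigma}(7)=3$ you quote (still far below $100$), and the crude bound $(n+1)^2$ from Lemma~\ref{lm:max} fails for $n=4,5,6$ (e.g.\ $\bar p(4)=14<25$), so the exact maxima $6,8,8$ you propose to compute are genuinely needed there.
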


\section{Proof of Theorem \ref{thm:main 2}}

Before we pass to the main part of this section, let us exhibit some motivation behind Theorem \ref{thm:main 2}.

\begin{re}{\rm It turns out that sequences satisfying $1\leq g(n)\leq \sigma(n)$ covers all possible $A$-partition functions (mentioned in Remark \ref{remark:p_A}), where $A$ is a subset of natural numbers. More precisely, any $A$-partition function $p_A(n)$ fulfills  the recurrence relation of the form
\begin{align*}
p_A(n)=\frac{1}{n}\sum_{j=1}^n\sigma_A(j)p_A(n-j),
\end{align*}
where $\sigma_A(k)$ is defined as a sum of those divisors of $k$ that belong to the set $A$. In particular, it implies that whenever $A$ is a subset of the set of natural numbers with $1\in A$ we have that
\begin{align*}
1\leq\sigma_A(n)\leq\sigma(n)\leq n(1+\ln(n))
\end{align*}
for every positive integer $n$. Moreover, the polynomials associated
with the sequence $\sigma_A(n)$ are directly connected to $p_A(n)$ via the equality $p_A(n)=P_n^{\sigma_A}(1)$.
}\end{re}

Now we proceed to the main part of this section. Since the considered sequence $g(n)$ is bounded from below by $1$ for each $n$, we can simply estimate the associated polynomials from below by the corresponding polynomials from \eqref{Formula for g_1(n)} for every $x\geq
1$. For the upper bound, we utilize the classic estimation $\sigma(n)\leq n(1+\ln(n))$, which maintains that
\begin{align*}
\max_{1\leq k\leq n}\frac{g(n+k)}{g(k)}\leq
\max _{1\leq k\leq n}g\left( n+k\right)
\leq \max _{1\leq k\leq n}\sigma \left(
n+k\right) \leq 2n(1+\ln(2n)).
\end{align*}
Therefore, if we optimally take  $x_0=3$, then it is enough to verify the inequality
\begin{align*}
\binom{n+2}{2}\geq2n(1+\ln(2n)),
\end{align*} 
which holds for every $n\geq15$. For the smaller values of $n$, we check the condition
\begin{align*}
\binom{n+2}{2}\geq \max _{1\leq k\leq n}\sigma \left(
n+k\right)
\end{align*}
one by one. It turns out that it is fulfilled for all $1\leq n\leq 15$
except for $n=2$ and $n=3$. However, since
\begin{align*}
P_{2}^{g}\left( 3\right) =&\frac{3}{2}\left(
3+g\left( 2\right) \right) \geq 6>4\geq g\left( 3\right)
,\\
P_{3}^{g}\left( 3\right)
=&\frac{9}{2}+\frac{9}{2}g\left( 2\right) +g\left( 3\right)
\geq 10>7\geq \max \left\{ g\left( 4\right) ,
\frac{g\left( 5\right) }{g\left( 2\right) }\right\} ,
\end{align*}
if we additionally require that
\begin{align*}
P_2^g(3)
\geq
\frac{g(4)}{g(2)}
\hspace{0.5cm}
\text{and}\hspace{0.5cm}
P_3^g(3)
\geq
\frac{g(6)}{g(3)}
,
\end{align*}
or, equivalently,
\begin{align*}
3g(2)\left(g(2)+3\right)\geq2g(4)\hspace{0.5cm}\text{and}\hspace{0.5cm}g(3)\left(2g(3)+9g(2)+9\right)\geq2g(6),
\end{align*}
then we can deduce that the submultiplicative property is satisfied for the considered family of polynomials $P_n^g(x)$ for all $x\geq 3$ because of  Theorem \ref{Theorem: General B-O (2)}; otherwise we 
require that $x_0=4$ and verify
that the inequality
\begin{align*}
\binom{n+3}{n}\geq2n(1+\ln(2n))
\end{align*}
holds true for all $n\geq1$, which is an elementary exercise.

In conclusion, we obtain Theorem \ref{thm:main 2}.

\section*{Acknowledgments}
The first author was supported by the National Science Center grant no.\ 2024/53/N/ST1/01538.

The last author was partially supported by the Faculty of Applied Mathematics AGH UST statutory tasks within subsidy of the Ministry of Science and Higher Education (Poland).

\end{document}